\newcommand{\R}{{\mathbb R}}
\theoremstyle{plain}
\newtheorem{thm}{Theorem}[section]
\newtheorem{lem}[thm]{Lemma}
\theoremstyle{definition}
\newtheorem{definition}{Definition}[section]
\theoremstyle{remark}
\newtheorem{remark}{Remark}[section]
\newtheorem{Cor}{Corollary}[section]
\theoremstyle{remark}
\numberwithin{equation}{section}
\begin{document}
\title[General and alien solutions of a functional equation...]
{General and alien solutions of a functional equation and of a functional inequality}

\author[W.~Fechner]{W\l{}odzimierz~Fechner}
\address{W\l{}odzimierz Fechner, 
Institute of Mathematics, 
University of Silesia, 
Bankowa 14, 
40-007 Katowice, 
POLAND}
\email{wlodzimierz.fechner@us.edu.pl; fechner@math.us.edu.pl}

\author[E.~Gselmann]{Eszter Gselmann}
\address{Eszter Gselmann, 
Institute of Mathematics, 
University of Debrecen, 
P.~O.~Box: 12., 
H--4010, 
Debrecen,
HUNGARY}
\email{gselmann@science.unideb.hu}

\thanks{The research of the second author has been supported by the Hungarian Scientific Research Fund
(OTKA) Grant NK 81402 and by the T\'AMOP 4.2.1./B-09/1/KONV-2010-0007 project implemented through the New Hungary Development Plan co-financed by the European Social Fund and the European Regional Development Fund}
\subjclass[2000]{39B52, 39B62, 39B72}
\keywords{functional equation, functional inequality, additive function, integral domain}

\begin{abstract}
The purpose of the present paper is to solve 
(under some assumption on the domain) the equation 
$$
g(x+y)-g(x)-g(y)=xf(y)+yf(x). 
$$
After determining the general solutions, we will investigate the 
so--called alien solutions. 
Finally, we will discuss the real solutions of the following related functional inequality:
$$
g(x+y)-g(x)-g(y)\geq xf(y)+yf(x). 
$$
\end{abstract}

\maketitle

\section{Introduction}

In mathematics there exist several notions concerning functions 
that are defined through two or more identities. 
For example, if $P$ and $Q$ are rings, then the function 
$f:P\to Q$ is termed a \emph{homomorphism} between $P$ and $Q$ if it is additive and multiplicative, i.e. if
\begin{equation}
f(x+y)=f(x)+f(y) \qquad  \left(x, y\in P\right)
	\label{A}
\end{equation}
and 
\begin{equation}
f(xy)=f(x)f(y) \qquad \left(x, y\in P\right)
	\label{M}
\end{equation}

Another example is, for instance, the notion of derivations. 
Let $Q$ be a ring and $P$ be its subring. 
A function $f:P\to Q$ is called a \emph{derivation} if it is additive and 
$$
f(xy)=xf(y)+f(x)y
\qquad 
\left(x, y\in P\right). 
$$

The following question naturally arises: Is it possible to characterize 
such type of functions via a single equation?
This problem was firstly investigated by \textsc{J.~Dhombres} in 
\cite{Dho88}. He examined the equation 
$$
af(xy)+bf(x)f(y)+cf(x+y)+d\left(f(x)+f(y)\right)=0, 
$$
where the unknown function $f$ maps a ring $X$ to a field
$Y$ and the constants $a, b, c$ and $d$ belong to the center of $Y$. 

Ten years later, in 1998 \textsc{R.~Ger} succeed to strengthen the results 
of \cite{Dho88}. 
In the paper \cite{Ger98} he proved several statements 
concerning the following equation which is the sum of \eqref{A} and \eqref{M}:
$$
f(x+y)+f(xy)=f(x)+f(y)+f(x)f(y). 
$$
In this direction some further results can be found in 
\textsc{Ger} \cite{Ger00} and in \textsc{Ger--Reich} \cite{GR10}. 

Similarly to the notion of homomorphisms, derivations 
can be characterized analogously. 
For example, in \cite{Gse09} the functional equation 
$$
f(x+y)-f(x)-f(y)=g(xy)-xg(y)-yg(x)
$$
is solved under the assumption that the domain of the 
functions $f$ and $g$ is a commutative field and the range of 
these functions is a vector space over this field. 

In parallel, several authors discussed various versions of the following functional inequality:
$$g(x+y) - g(x) - g(y) \geq \phi(x,y),$$ with some additional assumptions upon $g$ and $\phi$ 
(see \textsc{Baron--Kominek} \cite{BK}, \textsc{Cho\-czewski--Girgensohn--Kominek} \cite{C}, 
\textsc{Renardy} \cite{R}, see also \cite{W1, W2, W3}).
In Section 4 as a special case of $\phi$ we take $\phi(x,y) = f(x)y + yf(x)$ with $f$ satisfying certain further conditions. 

In this paper we will continue the above--mentioned research and we will examine the functional equation 
\begin{equation}
 \tag{$\ast$}
g(x+y)-g(x)-g(y)=xf(y)+yf(x), 
\end{equation}
where the unknown functions $f$ and $g$ are defined on an integral domain. 
Firstly, we will find the general solution of equation $\left(\ast\right)$. 
After that, we will study the following problem. 
Let $X$ be a ring. It is obvious that in case the function 
$g:X\to X$ is additive and the function $f:X\to X$ fulfills 
\begin{equation}
xf(y)+yf(x)=0 \qquad  \left(x, y\in X\right), 
	\label{d1}
\end{equation}
then equation $\left(\ast\right)$ holds. 
These solutions are the so--called \emph{alien solutions} of the 
equation in question. We will point out that equation $\left(\ast\right)$ 
has solutions that are not alien (in the above sense). 
Moreover, we will also give necessary and sufficient conditions on 
the functions $f$ and $g$ to be alien solutions of the equation in
question. 

In last section we confine ourselves to the following functional inequality 
\begin{equation}
 \tag{$\ast\ast$}
g(x+y)-g(x)-g(y)\geq xf(y)+yf(x), 
\end{equation}
where the unknown functions $f, g\colon \mathbb{R}\to \mathbb{R}$ satisfy some additional technical assumptions.

Finally, let us mention that functional equations, similar to equation 
$\left(\ast\right)$ were considered by several authors.  
For example in \textsc{Ebanks--Kannappan--Sahoo} \cite{EKS92}
the authors characterize all functions 
$f\colon \mathbb{K}\to G$ for which $f(x+y)-f(x)-f(y)$ depends only on the product $xy$ for all $x,y\in \mathbb{K}$, 
where $\mathbb{K}$ is a commutative field and $G$ is a uniquely $q$-divisible abelian group. 
In \textsc{Ebanks} \cite{Eba08} the equation 
$$
f(x+y)-f(x)-f(y)=g\left(H(x, y)\right)
$$
is investigated, where the unknown functions 
$f, g$ defined on a nonvoid interval $I\subset \mathbb{R}$  and $H\left(I\times I\right)$, respectively, satisfy some mild 
regularity conditions and the given function $H$ fulfills some stronger regularity assumption. 
Furthermore, we also note that in \textsc{J\'{a}rai--Maksa--P\'{a}les} \cite{JMP04} 
the authors described all Cauchy differences that can be written as a quasisum, i.e. they have dealt with 
the functional equation 
$$
f(x+y)-f(x)-f(y)=\alpha\left(\beta(x)+\beta(y)\right)
$$
for the unknown function $f:I\rightarrow\mathbb{R}$, 
and it is solved under the supposition that the functions $\alpha$ and $\beta$ are strictly 
monotonic. 

Let us emphasize that in our Theorem \ref{T} below \emph{no regularity assumption} 
is involved. Furthermore, we will work in a quite general framework concerning the domain and the target space of the unknown functions. 

\section{Preliminaries}

In this section we will fix the notations and the 
terminology that will be used subsequently. 
We refer the reader to the monographs of \textsc{Kuczma} \cite{Kuc85} and of \textsc{Shafarevich} \cite{Sha90}. 


\begin{definition}
By an \emph{integral domain} we understand a commutative unitary ring that contains no zero 
divisors.
\end{definition} 

The following notion will also be used in the next section. 

\begin{definition}
Let $n$ be a positive integer and G  an abelian group. 
An element $x\in G$ is said to be \emph{divisible by $n$}  if there is $y\in G$ such that $x=ny$. 
\end{definition}

\begin{lem}\label{L}
Let $X$ be an integral domain and assume that the function 
$f:X\to X$ fulfills \eqref{d1}. Then the function $2f:X\to X$ is identically zero. 
\end{lem}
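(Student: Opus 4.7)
The plan is to exploit the unit element of $X$ (which exists since an integral domain is unitary by definition) to reduce \eqref{d1} to a one-variable identity, and then test that identity at the unit.

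First I would substitute $y=1$ into \eqref{d1}. This yields
\[
xf(1)+f(x)=0 \qquad (x\in X),
\]
so that $f(x)=-xf(1)$ for every $x\in X$. In particular, the whole behaviour of $f$ is governed by the single element $f(1)\in X$.

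Next I would substitute $x=y=1$ into \eqref{d1} to obtain $2f(1)=0$. (Alternatively, one may reinsert the formula $f(x)=-xf(1)$ into \eqref{d1}, which gives $-2xyf(1)=0$ for all $x,y$, and then take $x=y=1$.)

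Finally, combining the two observations,
\[
2f(x)=-2xf(1)=-x\bigl(2f(1)\bigr)=0 \qquad (x\in X),
\]
which is exactly the claim. There is no real obstacle here: the argument uses only the existence of the multiplicative identity and commutativity; the absence of zero divisors (the extra strength of ``integral domain'' beyond ``commutative unitary ring'') is not needed for this particular lemma, although it is of course used elsewhere in the paper.
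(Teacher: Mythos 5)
Your proof is correct and follows essentially the same route as the paper: substitute $x=y=1$ to get $2f(1)=0$, substitute $y=1$ to express $f(x)$ via $f(1)$, and combine. Your added remark that only commutativity and the unit are needed (not the absence of zero divisors) is accurate.
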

\begin{proof}
First let us substitute $x\to 1$ and $y\to 1$ to derive $2f(1)=0$. 
Further, with the substitution $y\to 1$, the above equation yields that 
$$
2xf(1)+2f(x)=0
$$
is satisfied for all $x\in X$. Due to $2f(1)=0$, we obtain that 
$2f$ is identically zero on $X$, as claimed. 
\end{proof}

\begin{remark}
In general it is not true that $f=0$ in the foregoing lemma (however, under additional assumption  of the divisibility by $2$ postulated only for a single element $f(1)$ one can easily obtain $f=0$). 
If one take $X=\mathbb{Z}_2$ and consider the mapping   $f_1(x) = x$ then it is easy to check that
this functions provides (nonzero) solutions of the equation. On the other hand, the maps $f_2(x) = 1$ and $f_3(x) = x+1$ (the  remaining nonzero self-mappings on $X$) do not solve it.
\end{remark}

Let us also mention the following easily to verify result (the converse implication of a theorem due to \textsc{Jessen--Karpf--Thorup} \cite[Theorem 2]{JKT69}). 

\begin{thm}\label{JKT}
Let $X$ be an Abelian group and $f:X\to X$ an arbitrary function. Then the function $F:X\times X \to X$ defined by 
$$
F(x, y)=f(x+y)-f(x)-f(x) 
\qquad 
\left(x, y\in X\right)
$$ 
is symmetric, i.e., 
$$
F(x, y)=F(y, x)
\qquad 
\left(x, y\in X\right)
$$
and fulfills the co--cycle equation, that is, 
$$
F(x+y, z)+F(x, y)=F(x, y+z)+F(y, z)
$$
holds for all $x, y, z\in X$. 
\end{thm}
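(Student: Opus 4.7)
The plan is to verify both assertions by direct algebraic manipulation; nothing beyond the abelian group axioms of $X$ is needed. I read the Cauchy difference as $F(x,y)=f(x+y)-f(x)-f(y)$, since the display in the statement carries an evident typographical slip with $f(x)$ repeated in place of $f(y)$.

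For symmetry, I would simply observe that $x+y=y+x$ in $X$ and that the two subtracted terms may be interchanged in the abelian group $X$, so that
\[
F(x,y)=f(x+y)-f(x)-f(y)=f(y+x)-f(y)-f(x)=F(y,x).
\]

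For the cocycle identity I would expand each side and watch the telescoping. On the left,
\[
F(x+y,z)+F(x,y)=\bigl[f(x+y+z)-f(x+y)-f(z)\bigr]+\bigl[f(x+y)-f(x)-f(y)\bigr],
\]
so the two occurrences of $f(x+y)$ cancel, leaving $f(x+y+z)-f(x)-f(y)-f(z)$. On the right,
\[
F(x,y+z)+F(y,z)=\bigl[f(x+y+z)-f(x)-f(y+z)\bigr]+\bigl[f(y+z)-f(y)-f(z)\bigr],
\]
and the two occurrences of $f(y+z)$ cancel, again leaving $f(x+y+z)-f(x)-f(y)-f(z)$. Thus both sides coincide.

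There is no genuine obstacle here; the only thing to guard against is a sign error in the telescoping cancellations, which is pure bookkeeping. In particular the result uses neither the ring structure of $X$ nor any hypothesis on $f$ beyond its being a self-map of the abelian group.
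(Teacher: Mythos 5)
Your verification is correct, and it is exactly the routine computation the paper has in mind (the paper omits the proof entirely, calling the result ``easily to verify''): symmetry from commutativity of $X$ and the cocycle identity from the telescoping cancellation of $f(x+y)$ and $f(y+z)$. You are also right that the displayed formula in the statement contains a typo and should read $F(x,y)=f(x+y)-f(x)-f(y)$.
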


Finally, we will need the following two results. Recall that a map $f\colon X \to \R$ defined on an Abelian goup $X$ is subadditive if $$f(x+y) \leq f(x) + f(y)$$ for all $x, y\in X$.

\begin{Cor}[\cite{W1}, Corollary 1]\label{c1}
Assume that $X$ is an Abelian group, $f\colon X \rightarrow \mathbb{R}$ and $\phi\colon X \times
X\rightarrow \mathbb{R}$ satisfy 
\begin{equation}\label{main}
f(x+y) - f(x) - f(y) \geq \phi(x , y) \quad (x, y \in X),
\end{equation}
\begin{equation}\label{NP}
\phi (x,-y) \geq -\phi (x, y)  \quad (x, y \in X),
\end{equation}
\begin{equation}\label{2}
\left\{ \begin{array}{l} \limsup _{n \rightarrow + \infty},
\frac{1}{4^{n}} \phi(2^{n}x,2^{n}x) < + \infty \quad (x \in X),
\\
 \liminf _{n \rightarrow +
\infty} \frac{1}{4^{n}} \phi(2^{n}x,2^{n}y) \geq \phi(x,y)
\quad (x, y \in X)
\end{array}\right.
\end{equation}
and
\begin{equation}\label{--}
\phi(-x,-y) = \phi(x,y) \quad (x, y \in X).
\end{equation}
Then there exists a subadditive function $A\colon X \rightarrow\mathbb{R}$
such that 
$$f(x) = \frac{1}{2} \phi(x, x ) - A(x)  \quad 
(x \in X ).
$$ 
Moreover, $\phi$ is biadditive and symmetric.
\end{Cor}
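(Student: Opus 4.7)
My plan is to extract a quadratic form $Q$ hidden inside $f$ by a dyadic rescaling limit, identify $\phi$ with the symmetric bilinear form polarizing $Q$, and then recover the decomposition of $f$ via the polarization identity.

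First I would symmetrize, working with the even function $g(x):=f(x)+f(-x)$. Substituting $(x,y)\mapsto(-x,-y)$ in \eqref{main}, using \eqref{--}, and adding back to \eqref{main} yields $g(x+y)-g(x)-g(y)\ge 2\phi(x,y)$. Replacing $y$ by $-y$ gives the twin inequality $g(x-y)-g(x)-g(y)\ge 2\phi(x,-y)$, and combining the two in both directions (the second substitution $y\to -y$ is applied once more to reverse the resulting inequality) produces the exact identity
\[
g(x+y)-g(x-y)=2\bigl[\phi(x,y)-\phi(x,-y)\bigr]
\]
together with the parallelogram-type inequality $g(x+y)+g(x-y)\ge 2g(x)+2g(y)$, the latter invoking \eqref{NP}. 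Applying the parallelogram inequality once more with $(x,y)\mapsto(x+y,x-y)$ produces the dual upper bound $g(x+y)+g(x-y)\le \tfrac12[g(2x)+g(2y)]$.

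The heart of the proof is the construction and identification of $Q$. Setting $y=x$ in the parallelogram inequality and using $g(0)=2f(0)\le 0$ (obtained from \eqref{main} at $x=y=0$) shows that the sequence $g(2^n x)/4^n$ is monotone nondecreasing in $n$. Combined with the bound $g(x)\le f(0)+\phi(x,x)$ (from \eqref{main} at $y=-x$ together with \eqref{NP}) and the $\limsup$ clause of \eqref{2}, the sequence is bounded above, so $Q(x):=\lim_{n\to\infty} g(2^n x)/4^n$ exists. Passing both the parallelogram inequality and its dual to the limit sandwiches them into the equality $Q(x+y)+Q(x-y)=2Q(x)+2Q(y)$, so $Q$ is quadratic. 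Rescaling the even-part inequality $g(x+y)+g(x-y)-2g(x)-2g(y)\ge 2[\phi(x,y)+\phi(x,-y)]\ge 0$ by $2^n$, dividing by $4^n$, and taking $\liminf$ (via the $\liminf$ clause of \eqref{2}) then forces $\phi(x,y)+\phi(x,-y)=0$, since the left-hand side tends to zero by quadraticity of $Q$; hence $\phi$ is odd in $y$, and by \eqref{--} also in $x$. The exact identity above collapses to $\phi(x,y)=\tfrac14[g(x+y)-g(x-y)]$, and rescaling by $2^n$ gives $\phi(2^n x,2^n y)/4^n\to B(x,y)$, where $B(x,y):=\tfrac14[Q(x+y)-Q(x-y)]$ is the symmetric bilinear form polarizing $Q$. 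Comparison with the $\liminf$ clause of \eqref{2} gives $B\ge \phi$, and substituting $y\to -y$ using oddness of $\phi$ and $B$ gives the reverse, so $\phi=B$ is biadditive and symmetric, with $\phi(x,x)=Q(x)$.

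Finally, set $A(x):=\tfrac12\phi(x,x)-f(x)$. The polarization identity $\phi(x+y,x+y)=\phi(x,x)+2\phi(x,y)+\phi(y,y)$ converts the subadditivity $A(x+y)\le A(x)+A(y)$ verbatim into \eqref{main}, so $A$ is subadditive and the required decomposition holds. The main obstacle is the sandwich producing $Q$ quadratic: the parallelogram inequality for $g$ is directly visible from the hypotheses, but its dual upper bound is hidden and must be extracted via the substitution $(x,y)\mapsto(x+y,x-y)$. Without this dual, $Q$ would only be subquadratic, and the chain forcing $\phi_e=0$ and $\phi=B$ (hence biadditivity) would break down.
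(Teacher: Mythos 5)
The paper itself gives no proof of this statement --- it is imported verbatim from \cite{W1} --- so your argument has to stand on its own. Its global architecture is sound and close in spirit to the source: extract a quadratic map $Q$ by dyadic rescaling of the even part $g(x)=f(x)+f(-x)$, identify $\phi$ with the polarization of $Q$, and read off subadditivity of $A(x)=\frac12\phi(x,x)-f(x)$ directly from \eqref{main}. The parallelogram inequality for $g$, its dual obtained via $(x,y)\mapsto(x+y,x-y)$, the monotonicity and boundedness of $g(2^nx)/4^n$, the sandwich giving $Q(x+y)+Q(x-y)=2Q(x)+2Q(y)$, the argument that $\phi(x,y)+\phi(x,-y)=0$, and the final verification that $A$ is subadditive are all correct.

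However, one step is genuinely false: the ``exact identity'' $g(x+y)-g(x-y)=2\bigl[\phi(x,y)-\phi(x,-y)\bigr]$. You obtain it by ``combining'' the two estimates $g(x+y)-g(x)-g(y)\ge 2\phi(x,y)$ and $g(x-y)-g(x)-g(y)\ge 2\phi(x,-y)$, but these point in the same direction and cannot be subtracted, and no substitution reverses either of them. The identity also fails outright: take $X=\R$, $\phi\equiv 0$ and $f(x)=-|x|$, which satisfies every hypothesis, yet $g(x+y)-g(x-y)=2\bigl(|x-y|-|x+y|\bigr)\not\equiv 0$. Since you use this identity to conclude that $4^{-n}\phi(2^nx,2^ny)$ converges to $B(x,y)=\frac14[Q(x+y)-Q(x-y)]$, your derivation of $B\ge\phi$ collapses as written. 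The repair is short and stays inside your framework: rescale the legitimate inequality $g(x+y)-g(x)-g(y)\ge 2\phi(x,y)$ by $2^n$ and divide by $4^n$; the left-hand side converges to $Q(x+y)-Q(x)-Q(y)=2B(x,y)$, whence $2B(x,y)\ge\liminf_{n}4^{-n}\cdot 2\phi(2^nx,2^ny)\ge 2\phi(x,y)$ by the second clause of \eqref{2}. Replacing $y$ by $-y$ and using $B(x,-y)=-B(x,y)$ together with \eqref{NP} gives the reverse inequality, so $\phi=B$ is biadditive and symmetric, and the remainder of your proof goes through unchanged (this repair does not even require the intermediate fact that $\phi$ is odd in its second variable).
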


\begin{Cor}[\cite{W2}, Corollary 8]\label{c2} Assume $X$ to be uniquely $2$-divisible Abelian group and that
$f\colon X \rightarrow \mathbb{R}$, $\phi\colon X \times X\rightarrow \mathbb{R}$
satisfy \eqref{main}, \eqref{NP}, 
\begin{equation}\label{2W}
\phi(2x,2x) \leq 4 \phi(x,x) \quad (x \in X)
\end{equation} jointly with
\begin{equation}\label{0WE}
\forall_{x \in X} \exists_{k_{0} \in \mathbb{N}} \forall_{k \geq k_{0}}
f\left(\frac{x}{2^{k}}\right) + f\left(-\frac{x}{2^{k}}\right)  \geq 0 .
\end{equation}
Then there exists an additive function $a\colon X \rightarrow \mathbb{R}$
such that $$f(x) = \frac{1}{2} \phi(x, x ) + a(x) \quad (x\in X).$$
Moreover, $\phi$ is biadditive and symmetric.
\end{Cor}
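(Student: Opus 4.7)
The plan is to extract an additive function $a$ as the dyadic limit $a(x):=\lim_{n\to\infty}2^{n}f(x/2^{n})$, to identify $a(x)=f(x)-\tfrac12\phi(x,x)$ by squeezing one-sided bounds, and finally to deduce biadditivity of $\phi$ from a parallelogram identity that \eqref{2W} and \eqref{NP} conspire to make sharp.

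\medskip

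\emph{Preliminaries.} Plugging $x=y=0$ into \eqref{main} gives $f(0)\leq-\phi(0,0)$; \eqref{2W} at $0$ gives $\phi(0,0)\geq 0$; and \eqref{0WE} at $x=0$ gives $f(0)\geq 0$. Hence $f(0)=\phi(0,0)=0$. Substituting $y=-x$ in \eqref{main} and combining with \eqref{NP} at $y=x$ yields $f(x)+f(-x)\leq -\phi(x,-x)\leq \phi(x,x)$; combined with \eqref{0WE} this sandwiches
\[
0\leq f(x/2^{k})+f(-x/2^{k})\leq \phi(x/2^{k},x/2^{k})\qquad(k\geq k_{0}(x)),
\]
so in particular $\phi(x/2^{k},x/2^{k})\geq 0$ for large $k$.

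\medskip

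\emph{Constructing and identifying $a$.} Set $r_{n}(x):=2^{n}f(x/2^{n})$. Choosing $x=y=x/2^{n+1}$ in \eqref{main} and multiplying by $2^{n}$ yields $r_{n}(x)-r_{n+1}(x)\geq 2^{n}\phi(x/2^{n+1},x/2^{n+1})\geq 0$ for large $n$, so $r_{n}(\pm x)$ are both eventually decreasing; together with $r_{n}(x)+r_{n}(-x)\geq 0$ they are bounded, and $a(x):=\lim r_{n}(x)$ exists. The identity $r_{n}(2x)=2r_{n-1}(x)$ gives $a(2x)=2a(x)$. Telescoping the displayed inequality, and bounding $\phi(x/2^{k},x/2^{k})\geq\phi(x,x)/4^{k}$ by iterating \eqref{2W}, gives $f(x)-r_{n}(x)\geq\tfrac12\phi(x,x)(1-2^{-n})$, whence $a(x)\leq f(x)-\tfrac12\phi(x,x)$. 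Moreover, the chain
\[
0\leq r_{n}(x)+r_{n}(-x)\leq 2^{n}\phi(x/2^{n},x/2^{n})\leq 2\bigl(r_{n-1}(x)-r_{n}(x)\bigr)
\]
forces $a(x)+a(-x)=0$; writing the upper bound at $-x$ and comparing with $f(x)+f(-x)\leq\min\{\phi(x,x),\phi(-x,-x)\}$ from the Preliminaries then forces $\phi(x,x)=\phi(-x,-x)$, $f(x)+f(-x)=\phi(x,x)$, and the sharp identification $a(x)=f(x)-\tfrac12\phi(x,x)$. Consequently the telescoping sum collapses termwise to $\tfrac12\phi(x,x)$, sharpening \eqref{2W} to the equality $\phi(2u,2u)=4\phi(u,u)$ and \eqref{main} (with $y=x$) to $f(2u)=2f(u)+\phi(u,u)$.

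\medskip

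\emph{Biadditivity of $\phi$ and additivity of $a$.} Substituting $f=\tfrac12\phi(\cdot,\cdot)+a$ into \eqref{main} at the four pairs $(\pm x,\pm y)$ and summing (using $a(-\cdot)=-a(\cdot)$ and \eqref{NP}) produces
\[
\phi(x+y,x+y)+\phi(x-y,x-y)\geq 2\phi(x,x)+2\phi(y,y),
\]
and applying the same inequality at $\bigl((x+y)/2,(x-y)/2\bigr)$ while invoking the sharpened $\phi(2u,2u)=4\phi(u,u)$ reverses its direction; hence $q(u):=\phi(u,u)$ satisfies the parallelogram law exactly. Because $X$ is uniquely $2$-divisible with $q(0)=0$, the polarization $B(x,y):=\tfrac14[q(x+y)-q(x-y)]$ is symmetric and biadditive by a standard computation. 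A short two-sided estimate using the inequality $A(u+v,u-v)\leq\phi(u,u)-\phi(v,v)$ (which follows from the diagonal equality $f(2u)=2f(u)+\phi(u,u)$ and the relation $f(v)+f(-v)=\phi(v,v)$) together with \eqref{NP} identifies $\phi(x,y)=B(x,y)$, proving $\phi$ biadditive and symmetric. With biadditivity, $\tfrac12[\phi(x+y,x+y)-\phi(x,x)-\phi(y,y)]=\phi(x,y)$, so the rewritten \eqref{main} reduces to $a(x+y)\geq a(x)+a(y)$; combined with $a(-\cdot)=-a(\cdot)$ this upgrades superadditivity to additivity of $a$. The delicate step is the parallelogram reversal: it hinges on the equality case of \eqref{2W}, which is itself produced only by the telescoping collapse in the previous paragraph, and this interlock between the dyadic iteration and the scaling hypothesis is the crux of the argument.
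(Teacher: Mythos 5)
The paper never proves this statement: it is imported verbatim as Corollary 8 of \cite{W2} and used as a black box in Section 4, so there is no in-paper argument to compare yours against. Your blind proof is, as far as I can check, a correct and self-contained derivation, and its architecture (the dyadic sequence $r_n(x)=2^nf(x/2^n)$, squeezing via \eqref{0WE} against the bound $f(x)+f(-x)\le\phi(x,x)$, the equality cases forcing $\phi(2u,2u)=4\phi(u,u)$ and the parallelogram law for $q(u)=\phi(u,u)$, polarization, and finally superadditivity plus oddness of $a$) is the natural one for this family of results; the mean-versus-minimum trick that pins down $a(x)=f(x)-\tfrac12\phi(x,x)$ exactly is a nice touch. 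Two points need attention before the argument is publishable. First, ``$A(u+v,u-v)$'' is a slip for $\phi(u+v,u-v)$; the symbol $A$ is undefined in your text and collides with the subadditive map of Corollary \ref{c1}. Second, the inequality $\phi(u+v,u-v)\le\phi(u,u)-\phi(v,v)$ is asserted with only a parenthetical hint, and the reader must supply the chain: adding \eqref{main} at $(u,v)$ and at $(u,-v)$ and invoking \eqref{NP} together with $f(v)+f(-v)=\phi(v,v)$ gives $f(u+v)+f(u-v)-2f(u)\ge\phi(v,v)$, after which \eqref{main} at $(u+v,u-v)$ combined with the diagonal equality $f(2u)=2f(u)+\phi(u,u)$ yields the claim; substituting $s=u+v$, $t=u-v$ (legitimate by unique $2$-divisibility) and using $\phi(w/2,w/2)=\tfrac14\phi(w,w)$ turns this into $\phi(s,t)\le B(s,t)$, and \eqref{NP} together with $B(s,-t)=-B(s,t)$ supplies the reverse bound. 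With those two repairs the proof is complete and correct.
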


\section{Functional equation $\left(\ast\right)$}

The main result in this section is the following:

\begin{thm}\label{T}
Let $X$ be an integral domain. Then the functions $f, g \colon X \to X$ fulfill functional equation $\left(\ast\right)$
for all $x, y\in X$,
if and only if, there exist two mappings $A_1, A_2 \colon X \to X$ and a constant $c \in X$ such that
$A_1$ and $2A_2$ are additive and
\begin{align}\label{af}
4f(x) &= 2A_1(x) + 2cx^2 \qquad \left(x\in X\right), \\
6g(x) &= A_2(x) + 3xA_1(x) + cx^3   \qquad \left(x\in X\right).\label{ag}
\end{align}
\end{thm}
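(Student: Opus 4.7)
I would treat the two implications separately, with the bulk of the work in the ``only if'' direction. The strategy is to combine $(\ast)$ with the cocycle identity from Theorem \ref{JKT} applied to $g$ to extract the Cauchy difference of $f$ in closed form, and then construct $A_1$ and $A_2$ explicitly from $f$ and $g$.

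For the only-if part, set $F(x,y):=g(x+y)-g(x)-g(y)$; by Theorem \ref{JKT}, $F$ satisfies the co-cycle equation. Substituting the identity $F(x,y)=xf(y)+yf(x)$ into that co-cycle equation and cancelling the common terms, I expect to arrive at
\[
z\bigl[f(x+y)-f(x)-f(y)\bigr]=x\bigl[f(y+z)-f(y)-f(z)\bigr] \qquad (x,y,z\in X).
\]
Writing $D(u,v):=f(u+v)-f(u)-f(v)$, this reads $zD(x,y)=xD(y,z)$. Putting $x=z$ and exploiting that $X$ has no zero divisors yields the symmetry $D(x,y)=D(y,x)$ (a separate check handles $x=0$); specialising $x=z=1$ then produces $D(1,y)=cy$ with $c:=D(1,1)=f(2)-2f(1)$, and one more pass through $zD(x,y)=xD(y,z)$ with $z=1$ gives $D(x,y)=cxy$ for all $x,y\in X$.

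With this in hand, I would define $A_1(x):=2f(x)-cx^2$, and check via $(x+y)^2-x^2-y^2=2xy$ that the Cauchy difference of $A_1$ vanishes, so $A_1$ is additive; rearranging yields \eqref{af}. For $g$, I would put $A_2(x):=6g(x)-3xA_1(x)-cx^3$, multiply $(\ast)$ by $6$, substitute $2f(y)=A_1(y)+cy^2$, and expand $(x+y)A_1(x+y)$ via the additivity of $A_1$ together with $(x+y)^3-x^3-y^3=3xy(x+y)$. All terms should cancel and leave $A_2(x+y)-A_2(x)-A_2(y)=0$, so $A_2$ is additive (in fact strictly stronger than the asserted additivity of $2A_2$), and \eqref{ag} follows. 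The converse direction is a direct verification using the same identities: assuming \eqref{af}, \eqref{ag} and the additivity of $A_1$ and $2A_2$, an analogous computation reduces the difference of the two sides of $(\ast)$ (after clearing a common integer factor) to the Cauchy difference of $2A_2$, which vanishes.

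The main obstacle I anticipate is the passage from the three-variable identity $zD(x,y)=xD(y,z)$ to the closed form $D(x,y)=cxy$; this is precisely where the integral-domain hypothesis is essential, since it licenses the cancellations following the substitutions $x=z$ and $z=1$. Some attention will also be needed in small characteristics (where $2$ or $6$ may vanish), which is presumably why the theorem is stated in terms of $4f$ and $6g$ rather than $f$ and $g$ themselves, and why only $2A_2$ (not $A_2$) is required to be additive.
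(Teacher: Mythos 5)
Your plan is correct and reaches the conclusion by a genuinely different, and in fact shorter, route than the paper. The paper first splits $f$ and $g$ into odd and even parts $f_o,f_e,g_o,g_e$ (defined without halving), proves $g_e(x)=xf_o(x)$ and the additivity of $f_o$ by direct substitutions, and only then invokes the cocycle identity of Theorem \ref{JKT} --- and only for the Cauchy difference of $g_o$ --- to get $2f_e(x)=2cx^2$; the factors $4$ and $6$ and the weakened requirement that only $2A_2$ be additive in \eqref{af}--\eqref{ag} are artifacts of recombining via $2f=f_e+f_o$ and $2g=g_e+g_o$. You instead feed $F(x,y)=xf(y)+yf(x)$ directly into the cocycle identity, which after cancellation does yield $z\bigl[f(x+y)-f(x)-f(y)\bigr]=x\bigl[f(y+z)-f(y)-f(z)\bigr]$ exactly as you predict, and hence $f(x+y)-f(x)-f(y)=cxy$ with $c=f(2)-2f(1)$. (Two small corrections here: the symmetry of $D$ is automatic from its definition, so no zero-divisor argument is needed for it; and the substitution producing $D(1,y)=cy$ should be $x=y=1$, giving $zD(1,1)=D(1,z)$, not $x=z=1$, which is vacuous.) From $D(x,y)=cxy$ the additivity of $A_1(x):=2f(x)-cx^2$ and of $A_2(x):=6g(x)-3xA_1(x)-cx^3$ follows by the polynomial identities you cite, with no division anywhere, so your argument is valid in every characteristic and delivers a strictly stronger conclusion than the paper's: $2f(x)=A_1(x)+cx^2$ and $A_2$ itself, not merely $2A_2$, additive. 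The one caveat, which you flag yourself and which the paper shares (its ``if part'' is dismissed as a straightforward computation), is that the converse direction requires cancelling the integer factor $12$, so it genuinely needs $2\neq 0$ and $3\neq 0$ in $X$.
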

\begin{proof}
The \emph{if} part is a straightforward computation and therefore we will confine ourselves to the \emph{only if} part.

First, observe that substitution $y\to 0$ shows that $-g(0) = xf(0)$ for each $x \in X$ which easily implies that 
$$
f(0)=g(0)=0.
$$

Next, apply equation $\left(\ast\right)$ with $y\to x$ to obtain
\begin{equation}\label{aux1}
g(2x) - 2g(x) = 2xf(x) \quad (x \in X).
\end{equation}
Now, let us define four new functions $f_o, f_e, g_o, g_e\colon X \to X$ by the following formulas:
\begin{align*}
f_o(x) &= {f(x) - f(-x)}, \qquad f_e(x) = {f(x) + f(-x)}  \quad \left(x \in X\right);\\ 
g_o(x) &= {g(x) - g(-x)}, \qquad g_e(x) = {g(x) + g(-x)} \quad \left(x \in X\right).
\end{align*}
Replace in $\left(\ast\right)$ $x$ by $-x$ and $y$ by $-y$, respectively, to arrive at 
$$
g(-x-y) - g(-x) - g(-y) = -xf(-y) - yf(-x) \qquad \left(x, y \in X\right).
$$
By adding and subtracting this equality and $\left(\ast\right)$ side-by-side we deduce the following two equalities:
\begin{align}
g_e(x+y) - g_e(x) - g_e(y) &= xf_o(y) + yf_o(x) \qquad \left(x, y \in X\right);\label{eo}\\
g_o(x+y) - g_o(x) - g_o(y) &= xf_e(y) + yf_e(x) \qquad \left(x, y \in X\right).\label{oe}
\end{align}
On the other hand, substitution $y\to-x$ in $\left(\ast\right)$ leads to 
\begin{equation}\label{gf}
g_e(x) = xf_o(x) \qquad \left(x \in X\right).
\end{equation}
Further, substitution $x\to 2x$ and $y\to-x$ together with \eqref{aux1} gives us the equality 
$$
g_e(x) +2xf_e(x) = xf(2x) \qquad \left(x \in X\right).
$$
This and identity \eqref{gf} prove that
\begin{equation}\label{31}
f(2x) = 3f(x) + f(-x) \qquad \left(x \in X\right).
\end{equation}
Further, this implies the following properties of the functions $f_o$ and $f_e$:
\begin{equation}\label{24}
f_o(2x) = 2f_o(x)  \qquad \text{and} \qquad f_e(2x) = 4f_e(x) \qquad \left(x \in X\right).
\end{equation}

Now, join \eqref{eo} with \eqref{gf} to deduce
$$(x+y)f_o(x+y) =(x+y) [f_o(x)+f_o(y)] \qquad \left(x, y \in X\right),$$
which together with the fact that $X$ is an integral domain, imply that $f_o$ is additive. 
Thus there exists an additive function $A_{1}: X\to X$ such that 
$$
f_{o}(x)=A_{1}(x) \qquad 
\left(x\in X\right). 
$$
Additionally, using \eqref{gf} we also get that 
$$
 g_e(x)=xA_{1}(x) \qquad 
\left(x\in X\right). 
$$


It remains to solve equation \eqref{oe}. For our convenience let us denote the Cauchy difference of $g_o$ by $C$, that is, let
$$
C(x,y) = g_o(x+y) - g_o(x) - g_o(y) \qquad \left(x, y \in X\right).
$$ 
Due to Theorem \ref{JKT} the function $C$ fulfills the co--cycle equation 
$$
C(x+y,z) + C(x,y) = C(x,y+z) + C(y,z) \qquad \left(x, y, z \in X\right).
$$ 
Comparing this with the right hand side of \eqref{oe}, after some rearrangements we arrive at 
$$
x[f_e(y+z) -f_e(y)-f_e(z)]=z[f_e(x+y) - f_e(x) - f_e(y)] \qquad \left(x, y, z \in X\right).
$$
Apply this for for $z\to y$ and use the second equality from \eqref{24} to deduce the following relation
$$
2xf_e(y) = y[f_e(x+y) -f_e(x)-f_e(y)] \quad \left(x, y \in X\right).
$$ 
If we multiply both sides of the foregoing formula by $x$, then the following equality:
$$
2x^{2}f_e(y) = xy[f_e(x+y) -f_e(x)-f_e(y)] \quad \left(x, y \in X\right)
$$  
can be derived. 
Let us observe that the right hand side of this equation is symmetric in $x$ and $y$. 
Therefore, so is the left hand side. This implies however that 
$$
2x^2f_e(y) = 2y^2f_e(x)
$$ 
hold for any $x\in X$. 
If we substitute $y\to 1$ then we see that 
$$2f_e(x)=2cx^2
$$ 
for each $x \in X$, where $c = f_e(1)$.

To finish the proof we need to determine the function $g_o$. In view of the above representation 
of the function $f_e$, equation \eqref{oe} turns into
\begin{equation}\label{go}
2[g_o(x+y) - g_o(x) - g_o(y)] = 2cxy(x+y) \qquad \left(x, y \in X\right).
\end{equation}
Define the function $A_2: X\to X$ through the formula
$$
A_2(x)=3g_o(x)-c x^{3} \qquad 
\left(x\in X\right), 
$$
(the constant $c$ is the same as above). 
A direct calculation shows that in this case equation \eqref{go} yields that the function 
$2A_{2}$ is additive. Therefore
$$
3g_{o}(x)=A_{2}(x)+cx^{3} 
\qquad 
\left(x\in X\right). 
$$

To conclude the proof it suffices to use the above results concerning the functions 
$f_{o}, f_{e}, g_{o}$ and $g_{e}$ jointly with the fact that 
$$
2f(x)=f_{e}(x)+f_{o}(x) \qquad 
\text{and}
\qquad 
2g(x)=g_{e}(x)+g_{o}(x) 
\qquad 
\left(x\in X\right). 
$$
\end{proof}

If we assume additionally that the ring $X$ appearing in Theorem \ref{T} is uniquely divisible by $2$ and $3$ then formulas \eqref{af} and \eqref{ag} can be simplified. We have the following corollary.

\begin{Cor}\label{C1}
Let $X$ be an integral domain which is uniquely divisible by $2$ and $3$ and assume that equation $\left(\ast\right)$ holds for $f, g \colon X \to X$. Then, and only then, there exist two additive mappings $A_1, A_2 \colon X \to X$ and a constant $c \in X$ such that
\begin{align}\label{afc}
f(x) &= A_1(x) + cx^2 \qquad \left(x\in X\right), \\
g(x) &= A_2(x) + xA_1(x) + \frac13cx^3   \qquad \left(x\in X\right).\label{agc}
\end{align}
\end{Cor}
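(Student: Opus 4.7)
The plan is to deduce Corollary \ref{C1} directly from Theorem \ref{T} by using the unique divisibility by $2$ and $3$ to rescale the three parameters $A_1$, $A_2$, $c$ that appear in \eqref{af}--\eqref{ag}, absorbing the awkward coefficients $\tfrac12$ and $\tfrac16$ into the additive maps and the constant.

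First I would observe that the \emph{if} direction is a direct substitution into $(\ast)$ using the additivity of $A_1, A_2$, and so can be dispatched in one line. For the \emph{only if} direction, I would apply Theorem \ref{T} to obtain maps $B_1, B_2\colon X\to X$ and an element $d\in X$ with $B_1$ additive, $2B_2$ additive, and
$$
4f(x)=2B_1(x)+2dx^2,\qquad 6g(x)=B_2(x)+3xB_1(x)+dx^3
\qquad (x\in X).
$$
Since $X$ is uniquely $2$- and $3$-divisible, the elements $2$ and $3$ (and hence $6$) are invertible in $X$, so division by these constants is unambiguous in $X$.

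Next I would define the candidate data for the corollary by
$$
A_1(x):=\tfrac12 B_1(x),\qquad A_2(x):=\tfrac{1}{6}B_2(x),\qquad c:=\tfrac12 d.
$$
Then $A_1$ is additive because $B_1$ is, and $A_2=\tfrac{1}{12}(2B_2)$ is additive because $2B_2$ is (and $12$ is invertible). Dividing the two displayed identities by $4$ and $6$ respectively and substituting these definitions gives $f(x)=A_1(x)+cx^2$ and $g(x)=A_2(x)+xA_1(x)+\tfrac13 cx^3$, matching \eqref{afc}--\eqref{agc} exactly.

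There is no genuine obstacle here; the only point requiring any care is the verification that the rescaling of $A_2$ from $\tfrac16 B_2$ is legitimately additive, which uses both that $2B_2$ (rather than $B_2$ itself) is additive and that $12\in X^{\times}$, and the bookkeeping that $\tfrac{1}{6}d=\tfrac13 c$ with the new $c$. Once these two checks are in place, the corollary follows immediately.
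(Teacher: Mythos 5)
Your proposal is correct and follows exactly the route the paper intends: apply Theorem \ref{T} and use the invertibility of $2$ and $3$ (hence of $4$, $6$ and $12$) granted by unique divisibility to rescale $B_1$, $B_2$ and $d$, checking that $\tfrac{1}{12}(2B_2)$ is additive and that $\tfrac16 d=\tfrac13 c$. The bookkeeping is accurate and the converse direction by direct substitution is as the paper asserts, so nothing is missing.
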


\begin{remark}
We are grateful to Professor Maciej Sablik for a remark that the foregoing Corollary can be deuced from more general lemmas from papers M. Sablik \cite[Lemma 2.3]{S} and A. Lisak, M. Sablik \cite[Lemma 1]{LS}. More precisely, these general results imply that each solution of equation $\left(\ast\right)$ is a polynomial function of some degree. What remains to be done is to calculate the exact form of this polynomial function. However, unlike to our Theorem \ref{T} both \cite[Lemma 2.3]{S} and \cite[Lemma 1]{LS} require unique divisibility of the target space.
\end{remark}

Making use of Corollary \ref{C1}, we can easily derive a criteria on the 
functions $f$ and $g$ to be the alien solutions of equation $\left(\ast\right)$. 
By Lemma \ref{L} if $f$ and $g$ are alien then $f=0$ and $g$ is additive.

\begin{Cor}
 Let $X$ be an integral domain which is uniquely divisible by $2$ and $3$ and 
consider the functions $f, g:X\to X$ and assume that equation 
$\left(\ast\right)$  holds. 
Then the following statements are equivalent: 
\begin{enumerate}[(i)]
\item $f=0$ and $g$ is additive;
\item the function $f$ is even and $f(1)=0$;
\item the function $g$ is odd and $g(2)=2g(1)$. 
\end{enumerate}
\end{Cor}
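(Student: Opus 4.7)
The strategy is to appeal to Corollary \ref{C1}: every solution $(f,g)$ of $(\ast)$ admits additive maps $A_1, A_2 \colon X \to X$ and a constant $c \in X$ with $f(x) = A_1(x) + cx^2$ and $g(x) = A_2(x) + xA_1(x) + \tfrac{1}{3}cx^3$. Statement (i) says precisely that $A_1 \equiv 0$ and $c = 0$, so the task reduces to showing that (ii) and (iii) each force these two vanishing conditions. The implications (i) $\Rightarrow$ (ii) and (i) $\Rightarrow$ (iii) are immediate: if $f \equiv 0$ then it is even with $f(1)=0$, and if $g = A_2$ is additive then $g(-x)=-g(x)$ and $g(2) = g(1)+g(1) = 2g(1)$.

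For (ii) $\Rightarrow$ (i), I substitute the representation of $f$ into $f(-x)=f(x)$; using $A_1(-x)=-A_1(x)$, the $cx^2$ terms cancel and I obtain $2A_1(x)=0$ for every $x\in X$. Since $X$ is uniquely $2$-divisible, the only $y\in X$ satisfying $2y=0$ is $y=0$, hence $A_1\equiv 0$. The assumption $f(1)=0$ then reduces to $c=0$, giving $f\equiv 0$ and $g=A_2$.

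For (iii) $\Rightarrow$ (i), the representation yields $g(-x)=-A_2(x)+xA_1(x)-\tfrac{1}{3}cx^3$; comparing with $-g(x)$ forces $2xA_1(x)=0$ for all $x$. In an integral domain of characteristic $\neq 2$ (ensured by unique $2$-divisibility), this implies $A_1(x)=0$ for every nonzero $x$, hence $A_1\equiv 0$. Plugging $g(x)=A_2(x)+\tfrac{1}{3}cx^3$ into $g(2)=2g(1)$ and using additivity of $A_2$ produces $\tfrac{8}{3}c = \tfrac{2}{3}c$, i.e. $2c=0$, so $c=0$. The only care required throughout is in converting the algebraic consequences $2A_1(x)=0$, $2xA_1(x)=0$, and $2c=0$ into genuine vanishing, which is where the unique divisibility by $2$ and the integral domain hypothesis enter; beyond this, the argument is essentially bookkeeping on top of Corollary \ref{C1}.
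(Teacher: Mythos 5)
Your proposal is correct and follows exactly the route the paper intends: the paper's proof is stated only as "a direct calculation based on Corollary \ref{C1}," and your argument is precisely that calculation carried out, with the unique $2$-divisibility and integral-domain hypotheses invoked at the right spots to pass from $2A_1(x)=0$, $2xA_1(x)=0$, and $2c=0$ to actual vanishing.
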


\begin{proof}
The proof is a direct calculation based on Corollary \ref{C1}.
\end{proof}

Finally, we investigate the case when the functions occurring in equation 
$\left(\ast\right)$ are the same. In this case we prove the following. 

\begin{Cor}
 Let $X$ be an integral domain which is uniquely divisible by $2$ and $3$. 
Assume that the function $f:X \to  X$ fulfills 
$$
f(x+y)-f(x)-f(y)=xf(y)+yf(x)
$$
for any $x, y\in X$. Then and only then, the function $f$ is identically zero. 
\end{Cor}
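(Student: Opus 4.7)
The plan is to apply Corollary \ref{C1} with $g = f$, which provides additive mappings $A_1, A_2 \colon X \to X$ and a constant $c \in X$ for which $f$ admits two simultaneous representations:
$$
f(x) = A_1(x) + cx^2 = A_2(x) + xA_1(x) + \tfrac{1}{3} c x^3 \qquad (x \in X).
$$
The task will then be to show that these two representations can coexist only when $c = 0$ and $A_1 \equiv A_2 \equiv 0$, whereupon $f$ vanishes identically. The \emph{if} direction is, of course, immediate.

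The first step would be to specialize the resulting identity $A_1(x) + cx^2 = A_2(x) + xA_1(x) + \tfrac{1}{3} c x^3$ successively at $x = 1$, $x = 2$, and $x = -1$. Invoking additivity of $A_1$ and $A_2$ and the fact that unique $2$- and $3$-divisibility of $X$ makes it torsion-free with respect to these primes, these three equations should yield in turn $A_2(1) = \tfrac{2}{3}c$, then (after combining with the case $x=2$) $A_1(1) = 0$, and finally (from the case $x=-1$) the relation $2c = 0$, hence $c = 0$, and then also $A_2(1) = 0$.

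Once $c = 0$ is in hand, the identity collapses to $A_2(x) = (1 - x)\,A_1(x)$. Expanding the additivity relation $A_2(x+y) = A_2(x) + A_2(y)$ and using $A_1(x + y) = A_1(x) + A_1(y)$, a short cancellation should produce $xA_1(y) + yA_1(x) = 0$ for all $x, y \in X$. Setting $y = 1$ and recalling $A_1(1) = 0$ then forces $A_1 \equiv 0$, after which $A_2 \equiv 0$ follows at once and $f \equiv 0$. The only real obstacle is the careful bookkeeping in the three specializations; no deeper idea is needed once Corollary \ref{C1} is invoked.
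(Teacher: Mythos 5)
Your proposal is correct and takes the same route as the paper, which simply states that the corollary ``follows immediately from Corollary \ref{C1}''; you have filled in the omitted computation, and each step (the specializations at $x=1,2,-1$, the use of unique $2$-divisibility to cancel the factor $2$, and the final cancellation yielding $xA_1(y)+yA_1(x)=0$) checks out. No gaps.
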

\begin{proof}
Follows immediately from Corollary \ref{C1}.
\end{proof}

\section{Functional inequality $\left(\ast\ast\right)$}

We will apply Corollaries \ref{c1} and \ref{c2} to obtain two analogues of Theorem \ref{T} for inequality $\left(\ast\ast\right)$ under some additional assumptions.

\begin{thm}\label{p}
Assume that the functions $f, g:\mathbb{R} \to  \mathbb{R}$ 
fulfill inequality $\left(\ast\ast\right)$ for each $x, y \in \mathbb{R}$. 
If $f$ is odd and $f(2x) = 2f(x)$ for each $x \in \mathbb{R}$ then $f$ is 
additive and there exists a subadditive mapping $A\colon \mathbb{R} \to \mathbb{R}$ such that 
$$g(x) = xf(x) - A(x), $$ for all $x \in \mathbb{R}$.
\end{thm}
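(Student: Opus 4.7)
The plan is to apply Corollary \ref{c1} directly, with $\phi \colon \mathbb{R} \times \mathbb{R} \to \mathbb{R}$ defined by
$$
\phi(x,y) = xf(y) + yf(x).
$$
Then inequality $\left(\ast\ast\right)$ is precisely \eqref{main}, so everything reduces to checking that this particular $\phi$ satisfies the three structural hypotheses \eqref{NP}, \eqref{2}, \eqref{--}, and then unpacking the resulting conclusion.

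The verification of the three conditions uses only the assumed oddness of $f$ and the homogeneity $f(2x) = 2f(x)$. For \eqref{NP} I would compute $\phi(x,-y) = xf(-y) - yf(x) = -xf(y) - yf(x) = -\phi(x,y)$, so in fact equality holds. For \eqref{--} I would similarly get $\phi(-x,-y) = (-x)f(-y) + (-y)f(-x) = xf(y) + yf(x) = \phi(x,y)$. For \eqref{2} the key observation is that iterating $f(2x) = 2f(x)$ gives $f(2^n x) = 2^n f(x)$, hence
$$
\phi(2^n x, 2^n y) = 2^n x \cdot 2^n f(y) + 2^n y \cdot 2^n f(x) = 4^n \phi(x,y),
$$
so that $4^{-n}\phi(2^n x, 2^n x) = 2xf(x)$ is bounded in $n$ and $4^{-n}\phi(2^n x, 2^n y) = \phi(x,y)$, giving both halves of \eqref{2}.

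With the hypotheses of Corollary \ref{c1} satisfied, the conclusion provides a subadditive $A \colon \mathbb{R} \to \mathbb{R}$ with $g(x) = \tfrac{1}{2}\phi(x,x) - A(x)$, and asserts that $\phi$ is biadditive. Since $\tfrac{1}{2}\phi(x,x) = \tfrac{1}{2}(xf(x) + xf(x)) = xf(x)$, this gives the claimed representation of $g$. Finally, biadditivity of $\phi$ in its second argument reads
$$
xf(y+z) + (y+z)f(x) = xf(y) + yf(x) + xf(z) + zf(x),
$$
which collapses to $x\bigl[f(y+z) - f(y) - f(z)\bigr] = 0$ for all $x, y, z \in \mathbb{R}$; choosing $x = 1$ yields additivity of $f$.

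The whole argument is really just a check that this particular $\phi$ fits the template of Corollary \ref{c1}; there is no serious obstacle, but the small subtlety worth noting is that the assumption $f(2x) = 2f(x)$ (rather than some weaker condition) is exactly what is needed to make $\phi$ behave like a $4^n$-scaled quantity, which is what drives \eqref{2}. Without it, one would have to invoke Corollary \ref{c2} instead, under a different divisibility-type hypothesis.
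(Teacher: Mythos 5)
Your proposal is correct and follows essentially the same route as the paper: both define $\phi(x,y)=xf(y)+yf(x)$, verify the hypotheses of Corollary \ref{c1} from the oddness of $f$ and the relation $f(2x)=2f(x)$, and then read off the representation of $g$ and the additivity of $f$ from the conclusion (the paper merely states the verifications that you carry out explicitly).
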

\begin{proof}
Let us define $$\phi(x,y) = xf(y) + yf(x) \qquad 
\left(x, y\in \mathbb{R}\right).$$ 
One may calculate that thanks to our assumptions upon $f$ we have $$\phi(x,-y) =\phi(-x,y) =-\phi(x,y)\qquad \left(x, y\in \mathbb{R}\right)$$ 
and $$\phi(2x,2y) = 4\phi(x,y)\qquad \left(x, y\in \mathbb{R}\right).$$ 
This implies that assumptions of Corollary \ref{c1} are satisfied by $g$ and $\phi$. 
Therefore, we obtain the existence of a subadditive mapping $A\colon \mathbb{R} \to \mathbb{R}$ such that 
$$g(x) = \frac{1}{2}\phi(x,x) - A(x) \qquad \left(x\in \mathbb{R}\right)$$ and additionally we get that $\phi$ is biadditive. 
Using the latter assertion we may calculate that
\begin{align*}
   xf(y+z) + (y+z)f(x) &=  \phi(x,y+z) = \phi(x,y) + \phi(x,z) \\&= xf(y) + yf(x) + xf(z) + zf(x) 
\end{align*} 
 for all $x, y \in \mathbb{R}$ and this applied for $x=1$ gives us the additivity of $f$. 
To finish the proof note that $\phi(x,x) = 2xf(x)$  for all $x \in \mathbb{R}$.
\end{proof}


\begin{thm}
Under assumptions of Theorem \ref{p}, if additionally for each $x\in \mathbb{R}$ there exists $k_0\in \mathbb{N}$ 
such that for every $k \geq k_0$ we have $$g\left(\frac{x}{2^k}\right)+  g\left(-\frac{x}{2^k}\right)\geq 0,$$ 
then the map $A\colon \mathbb{R} \to \mathbb{R}$ postulated by Theorem \ref{p} is additive.
\end{thm}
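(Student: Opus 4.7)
The plan is to run the proof of Theorem \ref{p} in parallel, but applying Corollary \ref{c2} in place of Corollary \ref{c1}. Set once again
$$\phi(x,y) = xf(y) + yf(x) \qquad (x,y\in\mathbb{R}),$$
so that $(\ast\ast)$ is precisely \eqref{main} for the pair $(g,\phi)$. From the proof of Theorem \ref{p} we already know that \eqref{NP} holds (in fact with equality, because $f$ is odd), and the condition $f(2x)=2f(x)$ gives $\phi(2x,2x)=2\cdot 2x\cdot f(2x)=8xf(x)=4\phi(x,x)$, so \eqref{2W} holds (again with equality). The newly assumed condition on $g$ is exactly \eqref{0WE}, and $\mathbb{R}$ is a uniquely $2$-divisible abelian group.

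Consequently all hypotheses of Corollary \ref{c2} are satisfied, and that corollary delivers an additive map $a\colon \mathbb{R}\to\mathbb{R}$ with
$$g(x) = \tfrac{1}{2}\phi(x,x) + a(x) = xf(x) + a(x) \qquad (x\in\mathbb{R}).$$
On the other hand, Theorem \ref{p} has already shown
$$g(x) = xf(x) - A(x) \qquad (x\in\mathbb{R}).$$
Comparing the two representations gives $A(x) = -a(x)$ for all $x\in\mathbb{R}$, so $A$ is additive, as desired.

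There is essentially no serious obstacle: the whole argument reduces to checking that the extra hypothesis matches condition \eqref{0WE} verbatim and that \eqref{2W} follows (trivially, as an equality) from the homogeneity condition $f(2x)=2f(x)$. The only step requiring any care is the bookkeeping of signs when identifying $A$ with $-a$, but this is immediate from $\tfrac12\phi(x,x)=xf(x)$.
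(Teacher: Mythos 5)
Your proposal is correct and follows essentially the same route as the paper: both verify that the hypotheses \eqref{NP}, \eqref{2W} and \eqref{0WE} of Corollary \ref{c2} hold for $g$ and $\phi(x,y)=xf(y)+yf(x)$ and then read off the additive representation. Your version merely spells out the verifications and the identification $A=-a$ that the paper leaves implicit.
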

\begin{proof}
Preserving notations and using some calculations from the previous proof one can check 
that all assumptions of Corollary \ref{c2} are satisfied by $g$ and $\phi$ and the assertion follows from this result.
\end{proof}

\begin{remark}
One may easily observe that the alienation effect for inequality $\left(\ast\ast\right)$ does not hold 
under assumptions of the foregoing two theorems, except in the trivial case $f=0$. 
Indeed, assume that assertion of Theorem \ref{p} holds. 
To get the alienation effect we expect that  
$$g(x+y) - g(x) - g(y) \geq 0$$ and $$xf(y) + yf(x) \leq 0$$ for each $x, y \in \mathbb{R}$. 
The second inequality applied for $y=1$ implies that $$f(x) \leq -f(1)x \qquad (x \in \mathbb{R})$$ 
and this easily gives us that $f(x)=-f(1)x$ for each $x \in \mathbb{R}$ and consequently $f(1)=0$ and thus $f=0$.  
\end{remark}

\end{document}